\newcommand*\changed{}
\newcommand{\R}{\mathbb{R}}
\numberwithin{equation}{section}
\newtheorem{thm}{Theorem}[section]
\newtheorem{lem}[thm]{Lemma}
\theoremstyle{remark}
\newtheorem{rem}{Remark}[section]
\newcommand{\Del}[1]{}
\newcommand{\supp}{\operatorname{supp}}
\def\eps{\varepsilon}
\begin{document}

\title[Type I blow up]{On type I blow up formation for the critical NLW}

\author{Joachim Krieger, Willie Wong}

\subjclass[2010]{35L05, 35B44}

\keywords{critical wave equation, hyperbolic dynamics,  blowup, scattering, stability, invariant manifold}

\thanks{Support of the Swiss National Fund for
the first author is gratefully acknowledged.}

\begin{abstract}
\changed{We introduce a suitable concept of weak evolution in the context of
the radial quintic focussing semilinear wave equation on $\R^{3+1}$,
that is adapted to continuation past type II singularities. We show
that the weak extension leads to type I singularity formation for
initial data corresponding to: (i) the Kenig-Merle blow-up solutions
with initial energy below the ground state and (ii) the
Krieger-Nakanishi-Schlag blow-up solutions sitting initially near and
``above'' the ground state static solution.}
\end{abstract}

\maketitle

\section{Introduction}
We consider the critical focussing nonlinear wave equation on $\R^{3+1}$, given by 
\begin{equation}\label{eq:Main}
\Box u := -u_{tt} + \triangle u = -u^5\,,
\end{equation}
which has a (possibly negative) conserved energy 
\[
E(u): = \int_{\R^3}\big(\frac{1}{2}(u_t^2 +|\nabla_x u|^2) -
\frac{u^6}{6}\big)\,\mathrm{d}x\,.
\]
We restrict to radial solutions of the form $u(t, x) = v(t, |x|)$. It
is well-known and easy to show that this model admits finite time blow
up solutions \changed{with finite initial free energy}
\[ 
E_{\text{free}}(u)(t): = \int_{\R^3}\big[\frac{1}{2}(u_t^2 +
|\nabla_x u|^2)\big]\,\mathrm{d}x\,.
\]
\changed{One can start with the explicit ODE-type solutions}
\[
u(t, x) = \frac{(\frac{3}{4})^{\frac{1}{4}}}{(T-t)^{\frac{1}{2}}}
\]
\changed{for any $T\in \R_+$.} By truncation to a backward (or
forward) light cone and invocation of Huygens' principle, one can modify 
these to solutions \changed{for which the initial free energy is
finite}. Indeed, one may
consider data $u[0] = \big(u(0, \cdot), u_t(0, \cdot)\big)$ with 
\[
u(0, \cdot) =
\chi_{|x|<3T}\frac{(\frac{3}{4})^{\frac{1}{4}}}{T^{\frac{1}{2}}}\,,\quad u_t(0, \cdot) =  \chi_{|x|<3T}\frac{(\frac{3}{64})^{\frac{1}{4}}}{T^{\frac{3}{2}}}
\]
where the cut-off function $\chi_{|x|<3T}|_{|x|\leq 2T} = 1$ and smoothly truncates to the region $|x|<3T$. Observe that these solutions satisfy 
\[
\limsup_{t\nearrow T} \int_{|x| < T} \big[\frac12(u_t^2(t) + |\nabla_x
u(t)|^2)\big]\,\mathrm{d}x = +\infty\,,
\]
\changed{and thus cannot be continued past time $T$ (though a
singularity may form at some earlier time, depending on the
choice of cut-off $\chi_{|x|<3T}$).}

\changed{Motivated by these ODE type blow-ups, we say a blow-up
solution $u$ with maximum forward time of existence $T$ is of 
\emph{type I} if}
\[ \limsup_{t\nearrow T} E_{\text{free}}(u)(t) = +\infty \]
\changed{and \emph{type II} otherwise, that is}
\[
\limsup_{t\nearrow T}E_{\text{free}}(u)(t) < +\infty\,.
\]
Recent works by Duyckaerts-Kenig-Merle \cite{DKM1, DKM2, DKM3, DKM4}
have provided a complete classification near the blow-up time of type 
II solutions for
\eqref{eq:Main}, while existence of solutions of this type was
established in \cite{KST} and \cite{DoKr}. 
Here, we would like to discuss the formation of type I blow up. To the
best of the authors' knowledge, previously demonstrated type I blow up 
mechanisms all derive in principle from $u_t$ having a sign pointwise. 
In addition to the explicit ODE solutions (and perturbations thereof
as in \cite{DoSch}), Duyckaerts-Kenig-Merle showed in
\cite{DKM3} that monotonicity in time of a radial solution
close to the blow up time implies type I blow up, which they then used to
show that the $W^+$ solution of \cite{DM2} as well as solutions given
by initial data $u[0] = (cW, 0)$ with $c > 1$ all evolve into type I
blow ups. 

In a recent work \cite{CNLW3}, the study of all possible dynamics
which result as {\it{perturbations of the static solution}} $W(x) =
\left(1+\frac{|x|^2}{3}\right)^{-\frac{1}{2}}$ was begun. Note that these
static solutions are a special feature of the energy critical case.
Also, crucially for the analysis of \cite{CNLW3}, the perturbations
are close to $W$ with respect to a norm strictly stronger than the
energy. It was then shown in \cite{CNLW3} that there exists a
co-dimension one Lipschitz manifold $\Sigma$ passing through
$W$ such that within a sufficiently close neighbourhood to $W$, data
`above' $\Sigma$ result in finite time blow up while data `below'
$\Sigma$ scatter to zero, all in forward time. Further, data precisely
located on $\Sigma$ lead to solutions in forward time scattering toward a re-scaling of $W$. 

In this note, we would like to study the finite-time blow up solutions
corresponding to data slightly above $\Sigma$. Conjecturally, a
generic set within these solutions ought to correspond to type I blow
up solutions. At this time we cannot show this. Instead, our goal here
is to introduce a suitable concept of \emph{canonical weak solution}
and show that such solutions will result eventually, in finite time,
in a type I blow up scenario. This will be seen to directly result
from a combination of the recent breakthrough characterization of type
II blow up solutions by Duyckaerts-Kenig-Merle \cite{DKM4} with the
techniques developed in \cite{CNLW1}.  \changed{Along the way, we will
also show that the canonical weak extensions of the blow-up solutions 
exhibited by Kenig-Merle} \cite{KM2}, \changed{whose initial energy 
is below that of the ground state, terminates in finite time with
exploding free energy.}

\changed{The authors would like to thank the anonymous referee for the detailed
comments which improved the manuscript.}

\section{A canonical concept of weak evolution}\label{sec:canon}

Let $u(t, x)$ be a (radial) Shatah-Struwe energy class solution (see
e.g. \cite{ShaStr}, \cite{KM2}) of \eqref{eq:Main}, existing on an
interval $I =[0, T)$, $T>0$. Also, assume that $I$ is a maximal such
interval. If $T<\infty$, then the solution either has a type I
singularity at $T$, or else a type II singularity. Assume the latter
situation. According to the seminal work \cite{DKM4}, the solution
admits a decomposition (writing $W_\lambda(x) = \lambda^\frac12
W(\lambda x)$ for the $\dot{H}^1$ invariant scaling)
\begin{gather*}
u(t, \cdot) = \sum_{i=1}^{N}\kappa_iW_{\lambda_i(t)}(\cdot) + u_1(t,
\cdot) + o_{\dot{H}^1}(1),\quad \kappa_i\in \{\pm 1\},\\
u_t(t, \cdot) = u_{1,t}(t, \cdot) + o_{L^2}(1)
\end{gather*}
as $t\nearrow T$, with\footnote{The notation $a(t)\ll b(t)$ here
means $\lim_{t\nearrow T}\frac{b(t)}{a(t)} = +\infty$.} 
\[
(T-t)^{-1}\ll \lambda_1(t)\ll \lambda_2(t)\ll\ldots\ll \lambda_N(t)
\]
and $u_1(t, \cdot)$ is an energy class solution of \eqref{eq:Main}
in a neighborhood around time $t = T$. One easily verifies that this $u_1(t, \cdot)$ is indeed uniquely determined by $u$ and $T$. 
It is then natural, assuming that there is a type II singularity at time $t = T$, to continue the evolution past time $T$ by imposing data 
\[
u[T] = \big(u(T, \cdot), u_t(T, \cdot)\big): = \big(u_1(T, \cdot),
u_{1,t}(T, \cdot)\big)
\]
and then using the Shatah-Struwe evolution of $u[T]$ starting from
time $T =:T_1$. Then there exists $T_2\in (T_1, +\infty]$, such that
if $T_2<\infty$, there is either a type I or type II singularity at
$T_2$, and then in the latter case again the Duyckaerts-Kenig-Merle profile decomposition applies at time $t = T_2$, allowing us to write 
\[
u(t, \cdot) =
\sum_{i=1}^{N_2}\kappa_i^{(2)}W_{\lambda_i^{(2)}(t)}(\cdot) + u_2(t,
\cdot) + o_{\dot{H}^1}(1),\,\,t\nearrow T_2,
\]
where $u_2$ is now a solution of \eqref{eq:Main} in a neighborhood
containing $t = T_2$. In this way, we obtain a sequence of times 
\[
T_1<T_2<T_3<\ldots,\,T_i\in (0, +\infty]
\]
with the following possibilities: (i) the sequence is finite, and the
last $T_{\text{terminal}}:= T_N = +\infty$, with all previous $T_i$ being 
type II blow up times; (ii) the sequence is finite, and the last
$T_{\text{terminal}}:= T_N < \infty$ being the first type I blow up
time in the evolution; (iii) the sequence is infinite and we define
$T_{\text{terminal}}:= \lim_{i\to\infty} T_i \in (0,\infty]$. Note
that that except in case (ii) we have no \emph{a priori} knowledge as
to whether the solution blows up at $T_{\text{terminal}}$.

We now define the \emph{canonical evolution} of the data $u[0]$ on
$[0,T_{\text{terminal}})$ to be the function $\tilde{u}(t,\cdot)$ 
given by $u(t, \cdot)$ on $[0, T_1)$, by $u_1(t, \cdot)$ on $[T_1,
T_2)$ etc.  

On the other hand, we define $u(t, x)\in L^\infty([0,T_*),
\dot{H}^1)\cap W^{1, \infty}([0,T_*), L^2)$ to be a \emph{weak
solution} of \eqref{eq:Main}, provided for every $\phi\in
C_0^\infty((-\infty, T_*)\times\R^{3})$, we have 
\begin{multline}\label{eq:weak}
\int_{\R^3}u_t(0, \cdot)\phi(0, \cdot)\,\mathrm{d}x +
\int_0^{T_*}\int_{\R^3}\big(u_t\phi_t - \nabla_x
u\cdot\nabla_x\phi\big)\,\mathrm{d}x\,\mathrm{d}t\\
 = -\int_0^{T_*}\int_{\R^3}u^5\phi\,\mathrm{d}x\,\mathrm{d}t\,.
\end{multline}
Note that our concept of canonical weak evolution is in fact more regular than $ L^\infty([0,T_*), \dot{H}^1)\cap W^{1, \infty}([0,T_*), L^2)$, since  
\[
\tilde{u}|_{[T_{i-1}, T_i)}\in C^0\big([T_{i-1}, T_i),
\dot{H}^1\big)\cap C^1\big([T_{i-1}, T_i), L^2\big).
\]
In particular, for the canonical evolution $\tilde{u}$ is right-continuous
at time 0, that is
$\lim_{t\searrow 0} \tilde{u}(t,\cdot) = u(0, \cdot)$ with respect to $\dot{H}^1$. 
Then 
\begin{lem}Let $\tilde{u}(t, \cdot)$ be the canonical evolution of
$u[0]\in \dot{H}^1\times L^2$, defined on $[0, T_{\text{terminal}})$.
Then $\tilde{u}$ is a weak solution of \eqref{eq:Main} in the above
sense with $T_* = T_{\text{terminal}}$. 
\end{lem}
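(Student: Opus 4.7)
The plan is to leverage the fact that, on each interval $[T_{i-1},T_i)$, the canonical evolution $\tilde u$ coincides with a classical Shatah–Struwe energy solution (adopting the convention $T_0:=0$ and $u_0:=u$, so that $\tilde u|_{[T_{i-1},T_i)}=u_{i-1}$). On each such piece the usual integration-by-parts identity is available, and the substance of the proof is to glue these pieces across the transition times $T_i$ using the Duyckaerts–Kenig–Merle profile decomposition.

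Given $\phi\in C_0^\infty((-\infty,T_{\mathrm{terminal}})\times\R^3)$, I would first fix a finite $N$ such that $\supp\phi\subset(-\infty,T_N)\times\R^3$; this is possible in each of (i)–(iii), using $T_N=+\infty$ in (i), the terminal index in (ii), and $T_i\to T_{\mathrm{terminal}}$ in (iii). For each $i=1,\ldots,N$ and small $\varepsilon>0$ I write down the standard Shatah–Struwe identity
\[
\int_{\R^3} u_{i-1,t}(T_{i-1})\phi(T_{i-1})\,dx - \int_{\R^3} u_{i-1,t}(T_i-\varepsilon)\phi(T_i-\varepsilon)\,dx + \int_{T_{i-1}}^{T_i-\varepsilon}\!\!\int_{\R^3}(u_{i-1,t}\phi_t - \nabla u_{i-1}\cdot\nabla\phi)\,dx\,dt = -\int_{T_{i-1}}^{T_i-\varepsilon}\!\!\int_{\R^3} u_{i-1}^5\phi\,dx\,dt,
\]
then send $\varepsilon\searrow 0$ and sum over $i$. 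The bulk spacetime integrals extend to the full intervals $[T_{i-1},T_i)$ by dominated convergence: on each type II interval the free energy is uniformly bounded (by the very definition of type II together with continuity of the free energy for Shatah–Struwe solutions), so $\nabla u_{i-1},u_{i-1,t}\in L^\infty_tL^2_x$ and $u_{i-1}\in L^\infty_tL^6_x$ by Sobolev, which gives the required $L^1_{t,x}$ control of each integrand on $\supp\phi$. After summation, the interior boundary contributions at $T_1,\ldots,T_{N-1}$ telescope, the boundary term at $T_N$ vanishes since $\phi(T_N,\cdot)\equiv 0$, and what remains is exactly \eqref{eq:weak} (once one observes that all integrands vanish identically on $[T_N,T_{\mathrm{terminal}})$).

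The hard part, and the place where \cite{DKM4} genuinely enters, is the convergence of the velocity boundary pairing at each transition time, namely
\[
\lim_{\varepsilon\searrow 0}\int_{\R^3} u_{i-1,t}(T_i-\varepsilon)\phi(T_i-\varepsilon)\,dx = \int_{\R^3} u_{i,t}(T_i)\phi(T_i)\,dx,
\]
which is exactly what forces the telescoping to close. This follows from the second line of the DKM decomposition, $u_{i-1,t}(t)=u_{i,t}(t)+o_{L^2}(1)$ as $t\nearrow T_i$: although the concentrating bubbles $\kappa_j W_{\lambda_j(t)}$ contribute non-trivially to $u_{i-1}$ in $\dot H^1$, they produce no contribution to the velocity in $L^2$. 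Combined with the uniform convergence $\phi(T_i-\varepsilon)\to \phi(T_i)$, this yields the desired limit. Note that the analogous convergence for the gradient of $u_{i-1}$ is not needed, because $\nabla u_{i-1}$ enters only under a spacetime integral paired with $\nabla\phi$, where dominated convergence suffices; but without the strong (not merely weak) $L^2$ control of the velocity from \cite{DKM4}, the boundary telescoping would fail and the whole argument would collapse.
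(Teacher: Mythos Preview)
Your proof is correct and follows essentially the same strategy as the paper's: both arguments work piecewise on each $[T_{i-1},T_i)$ using the Shatah--Struwe weak identity, invoke the DKM velocity convergence $u_{i-1,t}\to u_{i,t}$ in $L^2$ to match the boundary terms at each $T_i$, and then telescope. The only cosmetic difference is that the paper regularises near $T_i$ with a sequence of smooth time cutoffs $\chi^{(k)}\to\chi_{[T_{i-1},T_i)}$, whereas you use a hard $\varepsilon$-truncation and dominated convergence; the substance is identical.
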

\begin{proof} Let $\phi\in C_0^\infty((-\infty,
T_{\text{terminal}})\times\R^{3})$. Then recalling the construction of
$\tilde{u}$, there exist finitely many $T_i$, $i = 1, 2,\ldots, k$,
$T_0: = 0$, with $T_i\in \pi_t(\supp(\phi))$ with $\pi_t:
\R^{3+1}\rightarrow \R$ the projection onto the time coordinate. Then
we have $\tilde{u}|_{[T_i, T_{i+1})} = u_i$, $u_0 = u$ being the
evolution of the data $u[0]$. Now for each $i$, pick a function
$\chi\in C_0^\infty([T_i, T_{i+1}))$ with $\chi(T_i) = 1$; then
integrating by parts the Shatah-Struwe energy class solution $u_i$ we
have
\begin{multline}\label{eq:weaki}
\int_{\R^3}u_{i,t}(T_i, \cdot)\phi(T_i, \cdot)\,\mathrm{d}x +
\int_{T_i}^{T_{i+1}}\int_{\R^3}\big(u_{i,t}(\chi\phi)_{t} - \nabla_x
u_i\cdot\nabla_x(\chi\phi)\big)\,\mathrm{d}x\,\mathrm{d}t\\
 =
-\int_{T_i}^{T_{i+1}}\int_{\R^3}u_i^5\chi\phi\,\mathrm{d}x\,\mathrm{d}t
\end{multline}
Pick a sequence $\chi^{(k)}\in C_0^\infty([T_i, T_{i+1}))$ with
$\chi^{(k)}\rightarrow \chi_{[T_i, T_{i+1})}$ pointwise and locally uniformly and such that 
\[
\lim_{k\rightarrow\infty}\int_{T_i}^{T_{i+1}}(\chi^{(k)})'(t)f(t)\,dt = -f(T_{i+1})
\]
for $f\in C^0([T_i, T_{i+1}])$. Since we can write (as $t\nearrow
T_{i+1}$ and where $\kappa_k^{(i)} \in \{\pm 1\}$)
\[
u_i(t, \cdot) \xrightarrow{\dot{H}^1} \sum_{k=1}^{N_i}\kappa_k^{(i)}
W_{\lambda_k^{(i)}(t)}(\cdot) + u_{i+1}(t,
\cdot),\qquad u_{i,t}\xrightarrow{L^2} u_{i+1,t}(T_{i+1}, \cdot),
\]
we infer 
\begin{multline*}
\lim_{k\rightarrow\infty}\int_{T_i}^{T_{i+1}}\int_{\R^3}\big(u_{i,t}(\chi^{(k)}\phi)_{t}
- \nabla_x
  u_i\cdot\nabla_x(\chi^{(k)}\phi)\big)\,\mathrm{d}x\,\mathrm{d}t\\ 
= \int_{T_i}^{T_{i+1}}\int_{\R^3}\big(u_{i,t}\phi_{t} - \nabla_x
u_i\cdot\nabla_x(\phi)\big)\,\mathrm{d}x\,\mathrm{d}t -
\int_{\R^3}u_{i+1, t}(T_{i+1},\cdot)\phi(T_{i+1},\cdot)\,\mathrm{d}x
\end{multline*}
and so we obtain 
\begin{multline}\label{eq:localweak}
 \int_{T_i}^{T_{i+1}}\int_{\R^3}\big(u_{i,t}\phi_{t} - \nabla_x
u_i\cdot\nabla_x(\phi)\big)\,\mathrm{d}x\,\mathrm{d}t  +
\int_{T_i}^{T_{i+1}}\int_{\R^3}u_i^5\phi\,\mathrm{d}x\,\mathrm{d}t\\
= \int_{\R^3}u_{i+1, t}(T_{i+1},\cdot)\phi(T_{i+1},\cdot)\,\mathrm{d}x -
\int_{\R^3}u_{i,t}(T_i, \cdot)\phi(T_i, \cdot)\,\mathrm{d}x \,.
\end{multline}
Summation of the relations \eqref{eq:localweak} over $i = 1, 2,\ldots, k$, we find the relation 
\begin{equation}\begin{split}
&\int_{\R^3}\tilde{u}_{t}(0, \cdot)\phi(0, \cdot)\,dx +
\int_{0}^{T_{\text{terminal}}}\int_{\R^3}\big(\tilde{u}_{t}\phi_{t} - \nabla_x\tilde{u}\cdot\nabla_x\phi\big)\,dxdt\\
& = -\int_{0}^{T_{\text{terminal}}}\int_{\R^3}\tilde{u}^5\phi\,dxdt,
\end{split}\end{equation}
proving the lemma.
\end{proof}

\begin{rem}
\changed{Our definition of the canonical weak solution bears some
similarity to the \emph{semi-Strichartz} solutions defined by Tao
for the focussing nonlinear Sch\"odinger equation} \cite{TaoSS}.
\changed{Tao used the notion of semi-Strichartz solutions to bridge
the gap between availability of global existence results of weak
solutions and uniqueness results of Strichartz-class solutions. Here
we use the notion of canonical weak solutions to bridge the gap
between free energy explosion (type I blow-up) and finite time
singularity formation.}
\end{rem}

\changed{An important consequence of the profile decomposition of
Duyckaerts-Kenig-Merle} \cite{DKM4} \changed{is that, per the asymptotic
separation of profiles and energy conservation of the regular
evolution, the energy of our canonical weak evolution is \emph{strictly
decreasing}. In fact, we have that}
\begin{equation}\label{eq:energyjump}
E(u_i) = N_i\cdot E(W) + E(u_{i+1})~, \qquad N_i \geq 1 
\end{equation}
\changed{since the soliton energy is scale invariant. Evaluating}
\[ E(W) = \frac{1}{3}\int_{\R^3}|\nabla W|^2\,\mathrm{d}x > 0
\]
\changed{we see explicitly the energy jump as solitons get bubbled
off. This also implies that our canonical weak evolution concept is
time \emph{irreversible}.}

\section{Formation of type I singularities: general case}
Before considering the blow-up solutions of \cite{CNLW3},
we first prove some general lemmas about the eventual
formation of type I singularities for our canonical weak evolution.

\begin{lem}\label{lem:fti}
If a canonical weak solution $\tilde{u}$ satisfies
$T_{\text{terminal}} < +\infty$, then it satisfies the ``type I'' condition
\begin{equation}\label{eq:weaktypeiblowup}
\limsup_{t\nearrow T_{\text{terminal}}}E_{\text{free}}(\tilde{u})(t) =
+\infty~.
\end{equation}
\end{lem}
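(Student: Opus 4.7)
The plan is to split into the two sub-scenarios of $T_{\text{terminal}} < \infty$. In case (ii), the sequence $\{T_i\}$ is finite and terminates at a type I blow-up time of the last Shatah--Struwe piece; the conclusion \eqref{eq:weaktypeiblowup} is then immediate from the definition of type I. So the only non-trivial situation is case (iii), where the sequence $\{T_i\}$ is infinite with $T_i \nearrow T_{\text{terminal}} < \infty$.

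For case (iii), the main idea is to iterate the energy-jump identity \eqref{eq:energyjump}. Since $N_i \geq 1$ for every $i$ and $E(W) > 0$, we get
\begin{equation*}
E(u_j) \;=\; E(u[0]) - E(W) \sum_{i=0}^{j-1} N_i \;\leq\; E(u[0]) - j \cdot E(W) \xrightarrow[j\to\infty]{} -\infty.
\end{equation*}
(The initial energy $E(u[0])$ is automatically finite since $u[0] \in \dot{H}^1 \times L^2$ forces $u(0) \in L^6$ by Sobolev embedding.) Thus the regular energies of the successive Shatah--Struwe pieces diverge to $-\infty$.

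To convert this into free-energy blow-up I would invoke the Sobolev embedding $\dot{H}^1(\R^3) \hookrightarrow L^6(\R^3)$. For any $(v, v_t) \in \dot{H}^1 \times L^2$ this gives
\begin{equation*}
E(v) \;\geq\; -\tfrac{1}{6} \|v\|_{L^6}^6 \;\geq\; -C \|\nabla v\|_{L^2}^6
\end{equation*}
for some universal constant $C$. Applied to $u_{j+1}$ evaluated at $T_{j+1}$, this forces $\|\nabla u_{j+1}(T_{j+1})\|_{L^2}^2 \gtrsim |E(u_{j+1})|^{1/3} \to \infty$, and by right-continuity of $\tilde{u}$ at $T_{j+1}$ we conclude $E_{\text{free}}(\tilde{u})(T_{j+1}) \to +\infty$ along the sequence $T_{j+1} \nearrow T_{\text{terminal}}$, yielding \eqref{eq:weaktypeiblowup}. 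No serious obstacle is anticipated: the argument is a clean consequence of the energy-jump relation supplied by the profile decomposition of \cite{DKM4} together with the Sobolev embedding, and the only subtlety is keeping track of where in the sequence of Shatah--Struwe pieces one is evaluating the energy.
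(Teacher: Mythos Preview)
Your proof is correct and follows essentially the same route as the paper: split into the finite type~I case and the infinite-sequence case, drive $E(u_j)\to-\infty$ via the energy-jump relation \eqref{eq:energyjump}, and then use Sobolev's embedding to force the free energy to diverge along the sequence $T_j\nearrow T_{\text{terminal}}$. The paper states the Sobolev step more tersely, but the content is the same.
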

\begin{proof}
As discussed in Section \ref{sec:canon}, the only possibility when
$T_{\text{terminal}} < +\infty$ is either $T_{\text{terminal}} = T_N <
\infty$ ending in a type I blow-up, or there exists an infinite
sequence $T_i \nearrow T_{\text{terminal}}$ of type II blow-up points.
In the first case \eqref{eq:weaktypeiblowup} follows by definition. In
the second case we appeal to the energy evolution
\eqref{eq:energyjump} which implies that $\lim_{i\to\infty} E(u_i) =
-\infty$. Then from Sobolev's embedding we obtain, as claimed,
\[
\lim_{i\rightarrow\infty}\|\nabla_{t,x}u_i(T_i, \cdot)\|_{L_x^2} =
+\infty~.
\]
\end{proof}

Next, we show one of the main advantages of our canonical weak
solution construction: it preserves the virial type functional used in
\cite{CNLW1}. Define, for some large large $\tau>0$ to be fixed later, and 
a cutoff $\chi\in C_0^{\infty}([0,\infty))$ with $\chi|_{[0,1]} = 1$,
the functions 
\begin{equation}\label{eq:defy}
w(t, x) = \chi\Big(\frac{|x|}{t+\tau}\Big)\,,\qquad y(t): = \langle
w(t,\cdot)\tilde{u}(t,\cdot), \tilde{u}(t,\cdot)\rangle\,
\end{equation}
Here the spatial $L^2$ pairing $\langle\cdot,\cdot\rangle$ is well-defined as
long as $\tilde{u}\in\dot{H}^1$, due to Sobolev's embedding and the
cutoff. This is definitely the case on each interval $I_j: = (T_{i-1}, T_i)$. 
In fact, from the computations in  \cite[Section 5]{CNLW1}, we have that 
$y(t), \dot{y}(t), \ddot{y}(t)$ are continuous functions on each open interval $I_j$. 
Now suppose that $\tilde{u}$ is a canonical weak solution maximally
defined on $[0,T_{\text{terminal}})$.
\begin{lem}\label{lem:cont}
 The functions $y(t), \dot{y}(t)$, and $\ddot{y}(t)$ extend
continuously to $(0,T_{\text{terminal}})$. 
\end{lem}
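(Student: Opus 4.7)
The functions are continuous on each open interval $I_j$ by \cite[Section 5]{CNLW1}, so we need only verify that the left and right limits at each type II blow-up time $T_j \in (0, T_{\text{terminal}})$ coincide. The right limits at $T_j$ are the values computed using $u_j$ alone, since $\tilde u = u_j$ on $[T_j, T_{j+1})$. For the left limits, we invoke the DKM decomposition
\[
\tilde u(t) = u_j(t) + \sum_{k=1}^{N_j} \kappa_k W_{\lambda_k(t)} + \eta(t), \qquad \tilde u_t(t) = u_{j,t}(t) + \tilde\eta(t),
\]
with $\eta \to 0$ in $\dot{H}^1$, $\tilde\eta \to 0$ in $L^2$, and $\lambda_k(t) \to \infty$ as $t \nearrow T_j$. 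The cutoff $w(t, x) = \chi(|x|/(t+\tau))$ has compact $x$-support for each $t$, and satisfies $w(T_j, 0) = 1$.

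For $y$ and $\dot y$, the integrands are $L^2(K)$-pairings of $\tilde u$ with $\tilde u$ or $\tilde u_t$, where $K = \supp_x w(t,\cdot)$. Since $\|W_\lambda\|_{L^2(K)} = O(\lambda^{-1/2}) \to 0$, every term containing a soliton factor vanishes in the limit, leaving exactly the value computed from $u_j(T_j)$ and $u_{j,t}(T_j)$. This shows $y(T_j^-) = y(T_j^+)$ and $\dot y(T_j^-) = \dot y(T_j^+)$.

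For $\ddot y$, using $u_{tt} = \Delta u + u^5$ and integration by parts, the virial identity on $I_j$ reads
\[
\ddot y = \int \bigl[w_{tt} \tilde u^2 + 4 w_t \tilde u \tilde u_t + 2 w \tilde u_t^2 + (\Delta w)\, \tilde u^2 - 2 w |\nabla \tilde u|^2 + 2 w \tilde u^6\bigr]\, dx.
\]
The first four integrands extend continuously by the preceding $L^2(K)$-argument. For the last two, a localized Pythagorean identity gives
\[
\int w |\nabla \tilde u|^2 \to \int w(T_j) |\nabla u_j(T_j)|^2 + N_j \int |\nabla W|^2, \quad \int w \tilde u^6 \to \int w(T_j) u_j(T_j)^6 + N_j \int W^6,
\]
where the soliton contributions come from concentration at the origin (change of variables $y = \lambda_k x$ combined with $w(T_j,0)=1$), and the cross terms (solitons at different scales; soliton against $u_j$; soliton against $\eta$) are of lower order. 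The static equation $-\Delta W = W^5$ yields $\int |\nabla W|^2 = \int W^6 = 3 E(W)$, so the jumps of $-2\int w |\nabla \tilde u|^2$ and of $+2\int w \tilde u^6$ at $T_j$ are $-6 N_j E(W)$ and $+6 N_j E(W)$ respectively, cancelling exactly. Hence $\ddot y$ extends continuously across $T_j$.

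The main technical difficulty is justifying the concentration limits for $\int w|\nabla\tilde u|^2$ and $\int w \tilde u^6$. The diagonal soliton terms are immediate after rescaling. The cross terms $\int w W_{\lambda_k}^a u_j^{6-a}\, dx$ for $1 \le a \le 5$ are not controlled by a naive H\"older bound (which only yields $O(1)$); they are handled by splitting the integration region into $\{|x| \lesssim \lambda_k^{-1/2}\}$ and its complement and using the local boundedness of $u_j$ together with the polynomial decay of $W$ at infinity. The gradient cross terms $\int w \nabla W_{\lambda_k} \cdot \nabla u_j$ are treated similarly after an integration by parts that converts $\Delta W_{\lambda_k}$ into $-W_{\lambda_k}^5$, reducing them to the same kind of concentration estimate.
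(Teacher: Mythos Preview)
Your proof is correct and follows essentially the same approach as the paper: both use the $L^2$-smallness of $W_\lambda$ on compact sets to handle $y$ and $\dot y$, and both observe that the soliton contributions to $\int w|\nabla\tilde u|^2$ and $\int w\tilde u^6$ cancel via the ground state identity $\int|\nabla W|^2 = \int W^6$ (the paper phrases this as $\langle 2w, |\nabla h_i|^2 - h_i^6\rangle \to 0$ for the soliton sum $h_i$, which is the same computation). One small caveat: your appeal to ``local boundedness of $u_j$'' for the cross terms is not quite right, since $u_j(T_j)$ is only in $\dot H^1$ and may be unbounded near the origin; the splitting you describe works, but the inner region should be controlled using $\|u_j\|_{L^6(|x|\lesssim \lambda_k^{-1/2})}\to 0$ (absolute continuity of the $L^6$ integral) rather than an $L^\infty$ bound.
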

\begin{proof} 
It suffices to check that the three functions are continuous at each
time $T_i$. Recall first the representation for $\tilde{u}$ from its
definition
\begin{equation}\label{eqn:repweak}\begin{aligned}
\tilde{u}(t,\cdot) &- \left[\sum_{k=1}^{N_i}\kappa_k^{(i)}
W_{\lambda_k^{(i)}(t)}(\cdot) + u_{i+1}(t,
\cdot)\right] \xrightarrow[t\nearrow T_{i+1}]{\dot{H}^1} 0~, \\
\tilde{u}_t(t,\cdot) &- u_{i+1,t}(T_{i+1},\cdot) 
\xrightarrow[t\nearrow T_{i+1}]{L^2} 0~.
\end{aligned}\end{equation}
Now observe that with a finite radius cutoff 
\[ \int_{|x| < R} W_\lambda^2\, \mathrm{d}x = 4\pi \int_0^R \frac{\lambda
r^2}{1 + \lambda^2 r^2/3} \, \mathrm{d}r \leq \frac{12\pi R}{\lambda} ~.\]
This implies that for each $\lambda_k^{(i)}$ we have 
\[ \sqrt{w} W_{\lambda_k^{(i)}(t)} \xrightarrow[t\nearrow
T_{i+1}]{L^2} 0 \]
and hence 
\[ \lim_{t\nearrow T_{i+1}} \langle w \tilde{u},\tilde{u}\rangle =
\langle w u_{i+1},u_{i+1} \rangle|_{t = T_{i+1}}\]
showing the continuity of $y(t)$. 

For the derivatives, we follow the computations in  \cite{CNLW1}. In particular, 
observe that 
\[
\dot{y}(t) = \langle \dot{w}\tilde{u} + 2w\dot{\tilde{u}}, \tilde{u}\rangle
\]
provided $t\in [T_{i}, T_{i+1})$. Now, the same argument as above
shows that using the uniformly bounded support of $\dot{w}$ and $w$
near $T_{i+1}$, 
\[ \lim_{t\nearrow T_{i+1}}\langle \dot{w}\tilde{u},\tilde{u}\rangle =
\langle \dot{w} u_{i+1},u_{i+1}\rangle|_{t = T_{i+1}}~.\]
Together with the $L^2$ convergence of $w\tilde{u} \to u_{i+1}(T_{i+1})$ and
$\dot{\tilde{u}} \to u_{i+1,t}(T_{i+1})$ as $t\nearrow T_{i+1}$
we get
\[
\lim_{t\nearrow T_{i+1}}\dot{y}(t) = \langle \dot{w}u_{i+1}(T_{i+1},
\cdot) + 2wu_{i+1, t}(T_{i+1}, \cdot),\,u_{i+1}(T_{i+1},
\cdot)\rangle\,.
\]
Since $\tilde{u}|_{[T_{i+1}, T_{i+2})} = u_{i+1}$, the continuity of $\dot{y}(t)$ across $t = T_{i+1}$ is evident. 
Next, consider $\ddot{y}(t)$, which according to  \cite{CNLW1} is given by the expression 
\begin{equation}\label{eq:key}
\ddot{y}(t) = \langle 2w,\,\dot{\tilde{u}}^2 - |\nabla \tilde{u}|^2 +
\tilde{u}^6\rangle + \langle \ddot{w}\tilde{u},\,\tilde{u}\rangle  +
\langle 4\dot{w}\tilde{u},\,\dot{\tilde{u}}\rangle - 2\langle
\tilde{u}\nabla w,\,\nabla \tilde{u}\rangle,\,t\in I_{i+1}\,.
\end{equation}
The continuity of the middle two terms at times $T_{i+1}$ is obtained
exactly as shown previously. For the last term, in addition to the
representation formulae \eqref{eqn:repweak} above we use also the fact that 
the derivative $\nabla w$ has compact spatial support uniformly (near
$t = T_{i+1}$) \emph{away} from the origin and so kills the contributions
from $\nabla W_{\lambda_k^{(i)}}(t)$. 

We examine the remaining term. The convergence in $L^2$ of
$\dot{\tilde{u}}$ to $\dot{u}_{i+1}$ as $t\nearrow T_{i+1}$ implies
\[
\lim_{t\nearrow T_{i+1}}\langle 2w(t,\cdot),\,\dot{\tilde{u}}^2(t,
\cdot)\rangle  = \langle 2w,\,u_{i+1,t}^2(T_{i+1}, \cdot)\rangle\,.
\]
The expressions $\langle 2w, |\nabla \tilde{u}|^2\rangle$ and $\langle
2w, \tilde{u}^6\rangle$ must be taken together as they \emph{do not
individually extend continuously across $T_{i+1}$}. Their difference,
however, does.  Indeed, it is straightforward to check that 
\[
\lim_{t\nearrow T_{i+1}}\langle 2w,\,|\nabla h_i|^2 - h_i^6\rangle =
0,\quad h_i(t, \cdot): = \sum_{k=1}^{N_i}\kappa_k^{(i)} W_{\lambda_k^{(i)}(t)}
\]
where we exploit of course the fact that $W$ is the ground state,
i.e.\ $\triangle W + W^5 = 0$, as well as the fact that the solitons
separate in scale, i.e.\ $\lambda_{k-1}^{(i)}\ll\lambda_{k}^{(i)}$. It follows that 
\[
\lim_{t\nearrow T_{i+1}}\langle 2w,\,- |\nabla \tilde{u}|^2 +
\tilde{u}^6\rangle = \langle 2w,\,- |\nabla u_{i+1}|^2 +
u_{i+1}^6\rangle |_{t = T_{i+1}}\,.
\]

The fact that $\ddot{y}(t)$ extends continuously across $T_{i+1}$
follows easily.
\end{proof}

We conclude this section with the following result, obtained as a
modification of the classical blow-up theorem of Levine \cite{Levine}.
\begin{lem}\label{lem:levine}
Let $\tilde{u}$ be a maximally extended canonical weak solution, and
suppose that at some positive time its energy $E(\tilde{u}) < 0$. Then
$T_{\text{terminal}} < +\infty$ for $\tilde{u}$. 
\end{lem}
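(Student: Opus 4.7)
The plan is to argue by contradiction. Suppose $T_{\text{terminal}} = +\infty$, and fix a time $t_0 \ge 0$ with $E(\tilde u)(t_0) = -\delta < 0$; by the energy jump relation \eqref{eq:energyjump} the piecewise energy is non-increasing under the canonical weak evolution, so $E(\tilde u)(t) \le -\delta$ for every $t \ge t_0$. I would then adapt Levine's concavity argument to the virial functional $y(t)$ from \eqref{eq:defy}. The decisive structural feature that makes this work in the weak setting is Lemma~\ref{lem:cont}: $y, \dot y, \ddot y$ extend continuously across each $T_i$ to the whole of $(0,+\infty)$, so $y$ can be treated as a single $C^2$ function on the evolution interval despite the bubble-off events.

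On each regular interval $I_{i+1}$, I would combine \eqref{eq:key} with the identity $\int w\tilde u^6\, \mathrm{d}x = 3\int w(\tilde u_t^2 + |\nabla\tilde u|^2)\, \mathrm{d}x - 6 E_w(t)$, where $E_w(t)$ denotes the cutoff energy associated to the weight $w$, to rewrite
\[
\ddot y(t) = 8\int w\tilde u_t^2\,\mathrm{d}x + 4\int w|\nabla\tilde u|^2\,\mathrm{d}x - 12 E_w(t) + R(t),
\]
where $R(t)$ collects the contributions of $\dot w, \ddot w, \nabla w$, supported in the shell $|x|\sim t+\tau$. A Cauchy--Schwarz bound applied to $\dot y(t) = \langle \dot w\tilde u + 2w\tilde u_t,\tilde u\rangle$ gives
\[
(\dot y(t))^2 \le 4 y(t)\int w\tilde u_t^2\,\mathrm{d}x + \widetilde R(t),
\]
and combining the two produces
\[
y(t)\ddot y(t) - 2(\dot y(t))^2 \ge -12 E_w(t)\, y(t) - y(t) R(t) - 2\widetilde R(t).
\]

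Choosing the parameter $\tau$ in \eqref{eq:defy} large relative to the initial free energy and invoking finite-speed-of-propagation together with exterior-energy bounds in the spirit of \cite{CNLW1}, I would show $E_w(t) \le -\delta/2$ uniformly on $[t_0,+\infty)$ and absorb $R, \widetilde R$ into the leading $|E_w| y$ term. What remains is the classical Levine inequality $y\ddot y \ge 2(\dot y)^2$ together with the coercive growth $\ddot y \ge c_0 > 0$ on some $[t_1,+\infty)$. Integrating $\ddot y \ge c_0$ forces $\dot y(t)$ to become and remain strictly positive, after which $y^{-1}$ is positive and concave with strictly negative slope at some time, hence reaches zero in finite time. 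That forces $y$ itself to blow up in finite time, contradicting continuity of $y$ on $[0,+\infty)$, and therefore $T_{\text{terminal}} < +\infty$.

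The main technical obstacle is the bookkeeping of the cutoff errors $R, \widetilde R$ and, relatedly, the control of $E_w - E(\tilde u)$ across the bubble-off events. The solitons $W_{\lambda^{(i)}_k}$ concentrate at scales $\lambda_k^{(i)}\to\infty$ near the origin and so contribute negligibly to the shell $|x|\sim t+\tau$ — the delicate point is to make this quantitative uniformly in $i$, and to verify that the energy mass shed by the canonical weak extension remains trapped inside the expanding cone $|x|\le t+\tau$, so that $E_w$ tracks the decreasing global energy closely enough for the concavity argument to close.
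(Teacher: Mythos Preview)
Your proposal is correct and follows essentially the same route as the paper: contradiction assuming $T_{\text{terminal}}=+\infty$, the virial identity for $\ddot y$ rewritten via the energy, smallness of the cutoff errors by taking $\tau$ large and invoking that solitons bubble off at the origin, Cauchy--Schwarz on $\dot y$, and then the Levine concavity argument. The only cosmetic differences are that the paper packages the cutoff errors as $O(E_{\text{ext}})$ rather than introducing $E_w$, and runs the concavity on $y^{-1/2}$ (from $\ddot y\ge \tfrac32\dot y^2/y$) rather than your $y^{-1}$ (from $y\ddot y\ge 2\dot y^2$); either exponent closes the argument.
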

\begin{proof}
Following \cite{CNLW1}, we observe that, due to the cut-off function
$w$ in \eqref{eq:defy}, we can write
\begin{equation}\label{eq:convex0}
\ddot{y}(t) = 2\big(4\|\dot{\tilde{u}}\|_{L^2}^2 +
4\|\nabla\tilde{u}\|_{L^2}^2 - 6E(\tilde{u})\big) + O(E_{\text{ext}})
\end{equation}
where  
\[
E_{\text{ext}}(t): = \int_{|x|>t+\tau}\big(|\dot{u}|^2 + |\nabla
u|^2\big)\,dx\lesssim E_{\text{ext}}(0)
\]
via a continuity argument and Huygens' principle, and the observation
that the bubbling off of solitons happen ``at the origin''. By picking
the initial cutoff $\tau > 0$ sufficiently large, we can force
$E_{\text{ext}}(0)$ as small as we want (as long as
$E_{\text{free}}(0)$ is finite). Suppose now (as given by the
hypothesis of our lemma) that for some $T_i$ that
$E(\tilde{u})|_{(T_i,T_{\text{terminal}})} < -2\varepsilon_* < 0$, where
we used the monotonicity of energy. Then a suitably large choice of $\tau$
would guarantee that
\begin{equation}\label{eq:convexT2} \ddot{y}(t) \geq 8
\|\dot{\tilde{u}}\|_{L^2}^2 + \eps_* \end{equation}
holds on $(T_i,T_{\text{terminal}})$.

Now assume, for contradiction, that $T_{\text{terminal}} = +\infty$. 
Note that \eqref{eq:convexT2} establishes a lower bound on
$\ddot{y}$ in $(T_i,\infty)$, which implies that after some large
finite time $\dot{y} > 0$ and $y > y_0$ is bounded below. Hence if one 
remarks just as in  \cite{CNLW1} that by Cauchy-Schwartz
\[ |\dot{y}(t)| = 2\langle w \dot{\tilde{u}},\tilde{u}\rangle +
O(E_{\text{ext}}) \leq 2 \sqrt{y} \|\dot{\tilde{u}}\|_{L^2}^{1/2} +
O(E_{\text{ext}}) \]
we have that at all sufficiently late times past $T_i$ we can upgrade
\eqref{eq:convexT2} to 
\begin{equation}\label{eq:convex2}
\ddot{y}(t)\geq \frac32 \frac{\dot{y}^2}{y} + \frac12 \eps_*~.
\end{equation}

From this inequality, however, we can apply the exact same argument as
in \cite{CNLW1}: \eqref{eq:convex2} implies that
$\frac{\mathrm{d}^2}{\mathrm{d}t^2} y^{-\frac12} < 0$ at all
$t > T_i$ sufficiently large; that $\dot{y}, y > 0$ implies that
$\frac{\mathrm{d}}{\mathrm{d}t}y^{-\frac12} < 0$ at all $t > T_i$
sufficiently large. Together the concavity implies $y(t)$ must blow up 
in finite time, ruling out the possibility $T_{\text{terminal}} =
+\infty$ and proving our lemma. 
\end{proof}

\begin{thm}\label{thm:km2}
The maximally extended canonical weak solution for the Kenig-Merle
type \cite{KM2} blow-up initial data with $E(u) < E(W)$ and
$\|\nabla_x u\|_{L^2} > \|\nabla W\|_{L^2}$ terminates in finite
$T_{\text{terminal}}$ with the type I condition
\eqref{eq:weaktypeiblowup} satisfied. 
\end{thm}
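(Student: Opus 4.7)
The plan is to chain together the Kenig-Merle finite-time blow-up result, the energy-jump identity \eqref{eq:energyjump}, and the two general lemmas already established. The structure of the canonical weak solution reduces the theorem to verifying that the energy becomes strictly negative somewhere along the evolution.

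First, by the Kenig-Merle criterion of \cite{KM2}, the hypothesis that $E(u_0) < E(W)$ together with $\|\nabla_x u_0\|_{L^2} > \|\nabla W\|_{L^2}$ forces the Shatah-Struwe evolution $u = u_0$ of the given initial data to have a finite maximal forward existence time $T_1 < \infty$. Thus the first breakdown time in the canonical weak solution is finite. There are two cases, corresponding to the type of the singularity at $T_1$. If the singularity at $T_1$ is already of type I, then $T_{\text{terminal}} = T_1 < \infty$ and \eqref{eq:weaktypeiblowup} holds by definition, so we are done.

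The remaining case is when $T_1$ is a type II blow-up. Then the Duyckaerts-Kenig-Merle profile decomposition \cite{DKM4} produces a nontrivial number $N_0 \geq 1$ of solitons bubbling off together with the regular continuation $u_1$. Using \eqref{eq:energyjump} we get
\[
E(u_1) \;=\; E(u_0) - N_0 E(W) \;\leq\; E(u_0) - E(W) \;<\; 0,
\]
and by monotonicity of the energy along the canonical weak evolution, $E(\tilde u)$ remains strictly negative on $[T_1, T_{\text{terminal}})$. Consequently the hypothesis of Lemma \ref{lem:levine} is met at time $T_1$, yielding $T_{\text{terminal}} < \infty$. Applying Lemma \ref{lem:fti} then gives the free-energy explosion \eqref{eq:weaktypeiblowup}, completing the proof.

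The only conceptual issue to guard against is the possibility that the type II bubbling does not produce a genuine energy drop, but the asymptotic scale separation of the profiles together with $E(W) > 0$ and $N_0 \geq 1$ in \eqref{eq:energyjump} makes this automatic, so no further work is required beyond citing the pieces above.
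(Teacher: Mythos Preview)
Your argument is correct and follows essentially the same route as the paper: split into the cases where $T_1$ is type I (done) or type II, in the latter use the energy-jump identity \eqref{eq:energyjump} with $N\geq 1$ and $E(u)<E(W)$ to force negative energy past $T_1$, then invoke Lemmas \ref{lem:levine} and \ref{lem:fti}. The only addition you make beyond the paper is the explicit appeal to \cite{KM2} for $T_1<\infty$, which the paper leaves implicit in the phrase ``Kenig-Merle type blow-up initial data.''
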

\begin{proof}
If $T_1$ ends in a type I blow-up, we are done. If not, by the profile
decomposition and \eqref{eq:energyjump} we have that
$E(\tilde{u})(T_1) = E(u)(0) - N_1\cdot E(W) < 0$. The theorem then
follows from Lemmas \ref{lem:fti} and \ref{lem:levine}. 
\end{proof}

\section{Formation of type I singularities: above threshold solutions}

\changed{Theorem} \ref{thm:km2} \changed{above settles the problem for initial data with
energy below that of the ground state, in view of the dichotomy proven
in} \cite{KM2}. \changed{We now turn our attention to
whether there exist \emph{generic sets} (not necessarily in the energy
topology) of solutions which satisfy $T_{\text{terminal}} < +\infty$
with initial energy above that of the ground state. We note that by
appropriately time-translating the type II blow-up solutions 
constructed in} \cite{KST}, \changed{we obtain one that satisfies $T_1 > 0$ and
$T_2 = +\infty$. In order to rule out these type of behaviour, we move
to a stronger topology}\footnote{This is analogous to \cite{KS}, where 
conditional stability of the ground state $W$ is shown for a stronger
topology than energy. The same question is open in energy topology. We
refer the readers to \cite{CNLW3} for a summary.}: \changed{here we review the 
results of} \cite{CNLW3}. 

\changed{First we recall that linearising} \eqref{eq:Main} 
\changed{around the solution
$W$ leads us to consider the linearised operator $-\triangle - 5 W^4$. On 
radial functions, this linearised operator has 
a unique negative eigenvalue $-k_d^2$ with eigenfunction $g_d$ satisfying
$g_d > 0$; this
contributes to the linear instability of the ground state $W$. 
In} \cite{KS}, \changed{it was shown, for initial data supported in a fixed ball 
with the topology $H^3_{\text{rad}}(\mathbb{R}^3) \times
H^2_{\text{rad}}(\mathbb{R}^3)$, that there exists a Lipschitz
manifold $\Sigma$ in a small neighbourhood of the ground state
$W$, which contains the soliton curve $\mathcal{S}$ (i. e. rescalings of $W$),
such that initial data given on $\Sigma$ exists globally and scatters
to $\mathcal{S}$. Moreover, this Lipschitz manifold $\Sigma$ is
transverse to $(g_d,0)$: indeed, $\Sigma$ is written as a Lipschitz graph
over the subspace orthogonal to $g_d$ of the tangent space at $W$.
Therefore an $\eps$-neighbourhood of $W$ can be divided
into the portion `above' $\Sigma$ (i. e. those that can be written
as $\sigma + \delta(g_d,0)$ for $\sigma\in \Sigma$ and $0 < \delta <
\eps$) 
and those `below' $\Sigma$ (with a minus sign instead). In}
\cite{CNLW3}, \changed{it was shown that this division provides a dichotomy:
those data sitting above $\Sigma$ blows up in finite time, while those
data sitting below $\Sigma$ has global existence in forward time and
scatters to zero in energy space.}

Our main theorem concerns the blow-up solutions sitting above
$\Sigma$:
\begin{thm}\label{thm:main} 
\changed{Let $u(t, \cdot)$ be one of the blow up
solutions with initial data of the form $\sigma + \delta(g_d,0)$ with
$\delta > 0$ as described above. Then the
canonical weak extension of this solution 
will satisfy $T_{\text{terminal}}< +\infty$. Furthermore,
the canonical weak solutions satisfy} \eqref{eq:weaktypeiblowup}.
\end{thm}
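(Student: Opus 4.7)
My plan is to reduce the result to Lemmas \ref{lem:fti} and \ref{lem:levine} by iterating the canonical weak extension, using the energy identity \eqref{eq:energyjump} to drive $E(\tilde{u})$ down, and using the virial convexity of \cite{CNLW1} together with Lemma \ref{lem:cont} to rule out the ``premature global existence'' scenario. The overall template parallels the proof of Theorem \ref{thm:km2}, but with an additional ingredient needed because here the initial energy may exceed $E(W)$.

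First I would invoke \cite{CNLW3} to conclude that the Shatah-Struwe evolution of $u_0 = \sigma + \delta(g_d, 0)$ reaches a first blow-up time $T_1 < +\infty$. If $T_1$ is of type I, \eqref{eq:weaktypeiblowup} follows at once. Otherwise I iterate the canonical weak extension to obtain a sequence $T_1 < T_2 < \ldots$ of type II blow-up times at each of which \eqref{eq:energyjump} drops the energy by at least $E(W) > 0$. If some $T_k$ is itself of type I, then $T_{\text{terminal}} = T_k < +\infty$ and Lemma \ref{lem:fti} applies. If the sequence is infinite, then $E(\tilde{u})(T_i^+) \to -\infty$, so some $T_i$ satisfies $E(\tilde{u})(T_i^+) < 0$, whereupon Lemma \ref{lem:levine} forces $T_{\text{terminal}} < +\infty$ and Lemma \ref{lem:fti} again yields \eqref{eq:weaktypeiblowup}.

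The remaining case --- and the main obstacle --- is a finite sequence $T_1 < \ldots < T_{N-1}$ of type II blow-ups followed by global existence past $T_{N-1}$, that is, $T_N = +\infty$. This is precisely the behaviour exemplified by the time-translated \cite{KST} solutions mentioned at the start of this section, and is the reason the stronger topology of \cite{CNLW3} is needed. To exclude it, I would propagate the virial convexity of \cite{CNLW1} through the bubble-shedding events: the initial placement above $\Sigma$ with the unstable mode $g_d$ activated yields a lower bound $\ddot{y}(t) \geq c_0 > 0$ on $[0, T_1)$ via \eqref{eq:convex0}. By Lemma \ref{lem:cont}, $y, \dot{y}, \ddot{y}$ extend continuously across each $T_i$, and the strict decrease of $E(\tilde{u})$ combined with \eqref{eq:convex0} preserves $\ddot{y}(t) \geq c_0$ on $(0, T_{\text{terminal}})$. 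This uniform convexity drives $\dot{y}(t)$ positive with $y$ bounded below, after which the Cauchy-Schwartz estimate from the proof of Lemma \ref{lem:levine} upgrades the bound to the form \eqref{eq:convex2}; the resulting concavity of $y^{-1/2}$ produces a finite virial blow-up time, contradicting $T_N = +\infty$.

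The technical crux in this last step is ensuring that the virial lower bound survives the \cite{DKM4}-type bubble injection at each $T_i$. This rests on the observation, already exploited in the proof of Lemma \ref{lem:cont}, that the bubbles $W_{\lambda_k^{(i)}}$ satisfy $\triangle W + W^5 = 0$ and are scale-separated, so their joint contribution to $\langle 2w, |\nabla \tilde{u}|^2 - \tilde{u}^6\rangle$ vanishes in the limit $t \nearrow T_i$; the energy jump at $T_i$ therefore strictly \emph{improves} the $-6 E(\tilde{u})$ term in \eqref{eq:convex0} rather than degrading it, confirming the persistence of the convexity estimate on the entire canonical interval.
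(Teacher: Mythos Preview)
Your reduction to the scenario ``finitely many type II bubblings followed by global existence'' is fine, and in fact (since the initial energy is within $\eps$ of $E(W)$ and each bubbling costs at least $E(W)$) this further reduces to: exactly one soliton bubbles off at $T_1$, and $T_2 = +\infty$. The gap is in how you exclude this case.

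You assert that the unstable-mode placement gives $\ddot y(t)\ge c_0>0$ on $[0,T_1)$, and that ``the strict decrease of $E(\tilde u)$ combined with \eqref{eq:convex0} preserves $\ddot y(t)\ge c_0$'' on $[T_1,\infty)$. The second claim is where the argument breaks. After one soliton is shed, $E(\tilde u)=E(u_1)=E(u_0)-E(W)$ is only $O(\eps)$ in absolute value (and may be \emph{positive}, since $E(u_0)$ can exceed $E(W)$). Formula \eqref{eq:convex0} then reads
\[
\ddot y(t)=2\big(4\|\dot u_1\|_{L^2}^2+4\|\nabla u_1\|_{L^2}^2-6E(u_1)\big)+O(E_{\text{ext}}),
\]
and if $u_1$ happens to be small in $\dot H^1\times L^2$ (and hence scatters), the right-hand side is merely $O(\eps)$, not bounded below by a universal $c_0$; the upgrade to \eqref{eq:convex2} then fails. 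This is precisely the behaviour of the time-translated \cite{KST} solutions you yourself flag: there the remainder after bubble-off is small and scatters, and the virial functional does \emph{not} blow up. Continuity of $\ddot y$ across $T_1$ (Lemma \ref{lem:cont}) does not help, because $\ddot y$ is free to decay on the forward evolution of $u_1$; the energy jump exactly compensates the drop in $\|\nabla\tilde u\|^2$ at $T_1$, so no net gain is produced.

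What is missing is a mechanism to rule out the small-remainder alternative. The paper supplies this via the \emph{one-pass theorem} of \cite{CNLW1}: the exponential growth of the unstable mode (Proposition 1.2 of \cite{CNLW3}) forces $\mathrm{dist}_{\dot H^1\times L^2}(u[t],\pm\mathcal S)\gg\sqrt{\eps}$ at some $t<T_1$, while a small $u_1$ would mean $u[\tilde t]$ is $O(\sqrt{\eps})$-close to $\pm\mathcal S$ for $\tilde t$ just below $T_1$; Theorem 4.1 of \cite{CNLW1} forbids this return. Once the small case is excluded, one has $\|\nabla u_1\|_{L^2}\gtrsim 1$ throughout $[T_1,\infty)$, and \eqref{eq:convex0} yields a genuine lower bound $\ddot y\ge\eps_*$, after which your Levine-type endgame goes through. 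Your proposal needs this dynamical input; the energy bookkeeping alone cannot distinguish the \cite{CNLW3} blow-up data from the \cite{KST} data.
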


\begin{proof}
By Lemma \ref{lem:fti} it suffices to rule out the case 
$T_{\text{terminal}} = +\infty$. Now, if $T_2
< T_{\text{terminal}}$ or $T_1 < T_{\text{terminal}}$ with $N_i > 1$,
by the energy jump condition \eqref{eq:energyjump} we have that the
conditions of Lemma \ref{lem:levine} is satisfied, since our initial
energy is close to that of a single soliton, and thus
$T_{\text{terminal}} < +\infty$. 

It remains to rule out the case where $T_1 < T_2 =
T_{\text{terminal}}= +\infty$, where \emph{exactly one} soliton has bubbled off
at $T_1$. \changed{For this, we will appeal to the one pass theorem of}
\cite{CNLW1}, \changed{which states roughly that, for initial data
close to the soliton curve $\mathcal{S}$, once the solution leaves a
small neighbourhood of $\mathcal{S}$ it can never return. More
precisely,} we can write
\[ u(t,\cdot) \approx \kappa W_{\lambda(t)} + u_1(t,\cdot)~,\,\kappa\in \{\pm 1\},\,
t\in [0,T_1)~.\]
For $t \in [T_1,T_2)$, the energy satisfies
\[ E(\tilde{u})(t) = \int_{\R^3}\big(\frac{1}{2}(\tilde{u}_t)^2 +
|\nabla\tilde{u}|^2) - \frac{1}{6}\tilde{u}^6\big)\,dx<\eps~,\]
and by using Sobolev's inequality we get that for some constant $C_* >
0$
\[ \frac12 \|\nabla \tilde{u}\|_{L^2}^2 - C_* \|\nabla
\tilde{u}\|_{L^2}^6 < \eps \]
which implies that if the constant $\eps$ (which we recall
measures \changed{the distance from the soliton curve of our initial
data}) is chosen sufficiently small, by continuity we
must have that throughout $t\in [T_1,T_2)$, either
\begin{subequations}
\begin{equation}\label{badcase} \|\nabla\tilde{u}\|_{L^2} \lesssim \sqrt{\eps}
\end{equation}
or
\begin{equation}\label{goodcase} \|\nabla\tilde{u}\|_{L^2} \gtrsim 1~.\end{equation}
\end{subequations}

\changed{We rule out the case} \eqref{badcase}:
\changed{it} would necessarily require a bound 
\[ \|\tilde{u}_t\|_{L^2} \lesssim \sqrt{\eps} \]
which implies that 
\[ \|\nabla_{t,x}u_1(T_1,\cdot)\|_{L^2_x} \lesssim \sqrt{\eps}~. \]
\changed{This requires that there exists $\tilde{t}$ less than but 
arbitrarily close to $T_1$ such that the inequality}
\begin{subequations}
\begin{equation}\label{closeto} \mathrm{dist}_{\dot{H}^1\times L^2}(u[\tilde{t}], \mathcal{S}\cup
-\mathcal{S}) \lesssim\sqrt{\eps}\end{equation}
\changed{holds. But from Proposition 1.2 and the proof of Theorem 1.1
in} \cite{CNLW3}
\changed{we see that, assuming $\eps > 0$ is sufficiently small, for
some $t \in [0,T_1)$ we must have}
\begin{equation}\label{awayfrom} \mathrm{dist}_{\dot{H}^1\times L^2}(u[t],\mathcal{S}\cup -
\mathcal{S}) \gg \sqrt{\epsilon} \end{equation}\end{subequations}
\changed{due to the exponential growth of the unstable mode. The two
equations} \eqref{closeto} \changed{and} \eqref{awayfrom} \changed{are
contradictory in view of Theorem 4.1 in} \cite{CNLW1}. 

\changed{It follows that the alternative} \eqref{goodcase} \changed{must hold. 
Looking} at \eqref{eq:convex0} again we see that \changed{if $\tau$ is chosen 
sufficiently large, and if $\eps$ is
sufficiently small, the expression} \eqref{eq:convexT2} \changed{would also
apply in $[T_1,T_2)$ for a suitable $\eps_*$.} We can then conclude exactly as in the proof of
Lemma \ref{lem:levine}.  
\end{proof}

\centerline{\scshape Joachim Krieger }
\medskip
{\footnotesize
 \centerline{B\^{a}timent des Math\'ematiques, EPFL}
\centerline{Station 8, 
CH-1015 Lausanne, 
  Switzerland}
  \centerline{\email{joachim.krieger@epfl.ch}}
} 

\medskip

\centerline{\scshape Willie Wong}
\medskip
{\footnotesize
 \centerline{B\^{a}timent des Math\'ematiques, EPFL}
\centerline{Station 8, 
CH-1015 Lausanne, 
  Switzerland}
  \centerline{\email{willie.wong@epfl.ch}}
} 

\bigskip


\begin{thebibliography}{10}







\bibitem{DoKr} 
\newblock Donninger, R., Krieger, J. (preprint 2012).
\newblock Nonscattering solutions and blow up at infinity for the critical wave equation. 
\newblock preprint arXiv: 1201.3258v1

\bibitem{DoSch} 
\newblock Donninger, R., Sch\"orkhuber, B. (preprint 2012).
\newblock Stable blow up dynamics for energy supercritical wave
equations. 
\newblock preprint, arXiv:1207.7046, to appear in Transactions of
the AMS. 

\bibitem{DKM1} 
\newblock Duyckaerts, T.,  Kenig, C.,  Merle, F.  (2011).
\newblock Universality of blow-up profile for small radial type II
blow-up solutions of energy-critical wave equation.
\newblock  \emph{J.\ Eur.\ Math.\ Soc.} 13:533--599.

\bibitem{DKM2}  
\newblock Duyckaerts, T., Kenig, C., Merle, F. (2012)
\newblock Universality of the blow-up profile for small type II
blow-up solutions of energy-critical wave equation: the non-radial
case.
\newblock \emph{J.\ Eur.\ Math.\ Soc.} 14:1389--1454.

\bibitem{DKM3}  
\newblock Duyckaerts, T., Kenig, C., Merle, F. (preprint 2012)
\newblock Profiles of bounded radial solutions of the focusing,
energy-critical wave equation. 
\newblock preprint, arXiv:1201.4986, to appear in GAFA. 

\bibitem{DKM4}  
\newblock Duyckaerts, T., Kenig, C., Merle F. (preprint 2012)
\newblock Classification of radial solutions of the focusing,
energy-critical wave equation.
\newblock preprint, arXiv:1204.0031. 


\bibitem{DM2} 
\newblock Duyckaerts, T.,  Merle, F. (2008)
\newblock Dynamic of threshold solutions for energy-critical wave equation.
\newblock \emph{Int.\ Math.\ Res.\ Pap.~IMRP} 2008:rpn002








\bibitem{KM2} 
\newblock Kenig, C., Merle, F.  (2008)
\newblock Global well-posedness, scattering and blow-up for the energy-critical focusing non-linear wave equation.  
\newblock \emph{Acta Math.} 201:147--212.

\bibitem{CNLW1} 
\newblock Krieger, J., Nakanishi, K., Schlag, W. (preprint 2010)
\newblock Global dynamics away from the ground state for the
energy-critical nonlinear wave equation. 
\newblock preprint, arXiv:1010.3799, to appear in Amer.\ Journal Math. 


\bibitem{CNLW3} 
\newblock Krieger, J., Nakanishi, K.,  Schlag, W. (preprint 2012)
\newblock Threshold phenomenon for the quintic wave equation in three
dimensions. 
\newblock preprint, arxiv:1209.0347. 

\bibitem{KS} 
\newblock J.\ Krieger,   W.\  Schlag (MR2325106)
\newblock \emph{On the focusing critical semi-linear wave equation.}  
\newblock Amer.\ J.\ Math.,  no.~3, \textbf{  129}  (2007), 843--913.


\bibitem{KST} 
\newblock Krieger, J., Schlag, W., Tataru, D.  (2009)
\newblock Slow blow-up solutions for the $H^1(\R^3)$ critical focusing semilinear wave equation.  
\newblock \emph{Duke Math.\ J.} 147:1--53.

\bibitem{Levine}
\newblock Levine, H.A. (1974)
\newblock {Instability and nonexistence of global solutions to
nonlinear wave equations of the form $Pu_{tt} = - Au +
\mathcal{F}(u)$}.
\newblock \emph{Trans.\ Amer.\ Math.\ Soc.} 192:1--21.













\bibitem{ShaStr}
\newblock Shatah, J., Struwe, M. (1994)
\newblock Well-posedness in the energy space for semi-linear wave equations with critical growth. 
\newblock \emph{Internat. Math. Res. Notices} 1994:303-309.



\bibitem{TaoSS}
\newblock Tao, T. (2009)
\newblock Global existence and uniqueness results for weak solutions
of the focusing mass-critical nonlinear Schr\"odinger equation.
\newblock \emph{Anal. PDE} 2:61--81. 

\end{thebibliography}
\end{document}